\pgfplotsset{compat=1.18}
\DeclareMathOperator{\Int}{Int}
\newtheorem{lemma}{Lemma}
\newtheorem{theorem}{Theorem}
\newtheorem{proposition}{Proposition}
\theoremstyle{remark}
\newtheorem{definition}{Definition}
\newtheorem{remark}{Remark}
\begin{document}
	\renewcommand{\refname}{References}
	\renewcommand{\proofname}{Proof.}
	\renewcommand{\figurename}{Fig.}

	\thispagestyle{empty}
	
	\title[HOPF-TYPE THEOREMS FOR CONVEX POLYHEDRA]{HOPF-TYPE THEOREMS FOR CONVEX SURFACES}
	\author{Ilya M Shirokov}%
	\address{Ilya Mikhailovich Shirokov
		\newline\hphantom{iii} St. Petersburg Department
		\newline\hphantom{iii} of Steklov Mathematical Institute,
		\newline\hphantom{iii} Fontanka, 27,
		\newline\hphantom{iii} 191023, St. Petersburg, Russia}%
	\email{shirokov.im@phystech.edu}%
	
	\maketitle {\small
		\begin{quote}
			\noindent{\sc Abstract. } 
In this paper we study variations of the Hopf theorem concerning continuous maps $f$ of a compact Riemannian manifold $M$ of dimension $n$ to $\mathbb{R}^n$. We investigate the case when $M$ is a closed convex $n$-dimensional surface and prove that the Hopf theorem (as well its quantitative generalization) is still valid but with the replacement of geodesic to quasigeodesic in the sense of Alexandrov (and Petrunin). Besides, we study a discrete version of the Hopf theorem. We say that a pair of points $a$ and $b$ are $f$-neighbors if $f(a) = f(b)$. We prove that if $(P,d)$ is a triangulation of a convex polyhedron in $\mathbb{R}^3$, with a metric $d$, compatable with topology of $P$, and $f \colon P \to \mathbb{R}^2$ is a simplicial map of general position, then there exists a polygonal path in the space of $f$-neighbors that connects a pair of `antipodal' points with a pair of identical points. Finally, we prove that the set of $f$-neighbors realizing a given distance $\delta > 0$ (in a specific interval), has non-trivial first Steenrod homology with coefficients in $\mathbb{Z}$.

			\medskip
			
			\noindent{\bf Keywords:} Borsuk--Ulam type theorems, the Hopf theorem, convex polyhedra, quasigeodesic.
		\end{quote}
	}

\section{Introduction}

In this paper we continue the program of generalizations of the Hopf theorem, concerning continuous maps $f$ of a compact Riemannian manifold $M$ of dimension $n$ to Euclidean space $\mathbb{R}^n$ (see \cite{Previous} for a description of the program):

\begin{theorem}[H. Hopf \cite{Hopf}]
    Let $n$ be a positive integer, let $M$ be a compact Riemannian manifold of dimension $n$, and let $f\colon M \to \mathbb{R}^n$ be a continuous map. Then for any prescribed $\delta > 0$, there exists a pair $\{x,y\} \in M \times M$ such that $f(x) = f(y)$ and $x$ and $y$ are joined by a geodesic of length~$\delta$.
\end{theorem}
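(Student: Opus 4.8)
The plan is to reformulate the existence of a geodesic $\delta$-chord as the vanishing of a single map on the unit tangent bundle. Let $UM$ denote the unit tangent bundle of $M$ and, for the prescribed $\delta>0$, put
\[
  \Phi_\delta\colon UM\to\mathbb{R}^n,\qquad \Phi_\delta(x,v)=f\bigl(\exp_x(\delta v)\bigr)-f(x);
\]
a pair $\{x,y\}$ as in the statement exists if and only if $\Phi_\delta^{-1}(0)\neq\varnothing$. The crucial symmetry is the segment-flip $\tau_\delta\colon UM\to UM$, $\tau_\delta(x,v)=(\exp_x(\delta v),-\dot\gamma_{x,v}(\delta))$, which reverses the geodesic segment of length $\delta$ determined by $(x,v)$. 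One checks directly that $\tau_\delta$ is an involution and that $\Phi_\delta\circ\tau_\delta=-\Phi_\delta$; moreover $\tau_\delta$ is \emph{free}, since a fixed point would produce a unit-speed geodesic segment invariant under reversal about its midpoint, forcing its speed to vanish there. Hence, if $\Phi_\delta$ had no zero, we would obtain a $\mathbb{Z}/2$-equivariant map $UM\to\mathbb{R}^n\setminus\{0\}\simeq S^{n-1}$ (with the antipodal action on the target), and the theorem reduces to a Borsuk--Ulam-type non-existence statement.

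First I would simplify the involution. The flips $\tau_t(x,v)=(\exp_x(tv),-\dot\gamma_{x,v}(t))$, $t\in[0,\delta]$, form an isotopy through free involutions connecting $\tau_\delta$ to the fibrewise antipodal map $a(x,v)=(x,-v)$; therefore $(UM,\tau_\delta)\cong_{\mathbb{Z}/2}(UM,a)$, and it is enough to exclude a $\mathbb{Z}/2$-equivariant map $(UM,a)\to S^{n-1}$. Now $UM/a=\mathbb{P}(TM)$ is the projectivised tangent bundle, and an odd map $UM\to\mathbb{R}^n$ is the same as a section of the rank-$n$ bundle $L^{\oplus n}$ over $\mathbb{P}(TM)$, where $L$ is the tautological real line bundle; its zeros correspond to zeros of $\Phi_\delta$. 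The mod-$2$ Euler class of $L^{\oplus n}$ is $w_1(L)^n$, and since $L$ restricts to the tautological line bundle on each fibre $\mathbb{RP}^{n-1}$, the Leray--Hirsch theorem gives the relation $w_1(L)^n=\sum_{k=1}^{n}w_k(TM)\,w_1(L)^{\,n-k}$ in $H^n(\mathbb{P}(TM);\mathbb{Z}/2)$, whose summands lie in distinct Leray--Hirsch components. Thus $w_1(L)^n\neq 0$ whenever the total Stiefel--Whitney class $w(TM)$ is non-trivial, in which case every section of $L^{\oplus n}$, hence $\Phi_\delta$ itself, must vanish somewhere; this settles the theorem for all such $M$.

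The hard part is the remaining case, where $w(TM)=1$ (for instance $M$ a torus or a Lie group): the primary obstruction $w_1(L)^n$ then vanishes, and for parallelizable $M$ the trivialisation $UM\cong M\times S^{n-1}$ makes the projection to $S^{n-1}$ equivariant, so an equivariant map actually exists and one must instead exploit the geometry of $\exp$. My plan here is to study the length set $S=\{\delta>0:\ f\text{ has a geodesic }\delta\text{-chord}\}$ directly: $S$ is closed by compactness of $UM$; it contains arbitrarily small values because a continuous map of a closed $n$-manifold to $\mathbb{R}^n$ is never locally injective (invariance of domain), so $f$ identifies points at arbitrarily small distance; and it is unbounded because, by the Lyusternik--Fet theorem, $M$ carries a closed geodesic of some length $\ell$, whence every $k\ell\in S$. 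It then remains to prove that $S$ has no gaps, and this is the genuine obstacle I expect: I would attack it by passing to a generic smooth approximation of $f$ (whose fold locus already produces $\delta$-chords of every sufficiently small length), analysing the solution set $\{(x,v,\delta):f(\exp_x(\delta v))=f(x)\}$ as a bordism varying with $\delta$, and using that ``$f$ has a $\delta$-chord'' is a closed condition under uniform approximation to return to arbitrary continuous $f$. In short, the Borsuk--Ulam core disposes of most manifolds cleanly, and the parallelizable case is where the real difficulty concentrates.
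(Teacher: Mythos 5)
The paper states this result as Hopf's classical theorem and cites \cite{Hopf} without reproducing a proof, so there is no internal argument to compare against; judged on its own merits, your proposal contains a genuine gap, and it is one you yourself flag. The correct parts are the reduction to the vanishing of $\Phi_\delta$ on $UM$, the verification that $\tau_\delta$ is a free involution anticommuting with $\Phi_\delta$, the isotopy of $\tau_\delta$ to the fibrewise antipodal map, and the identification of the primary obstruction with $w_1(L)^n$ via Leray--Hirsch. But the class of manifolds this disposes of is far smaller than ``most'': $w_1(L)^n\neq 0$ forces $w(TM)\neq 1$, which fails for every sphere ($TS^n$ is stably trivial), every closed orientable surface (there $w_1=0$ and $w_2\equiv\chi\equiv 0 \bmod 2$), every torus and compact Lie group, and every stably parallelizable manifold. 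So the ``remaining case'' is not a residual difficulty: it contains the most basic instances of the theorem. Moreover, when $w(TM)=1$ but $M$ is not parallelizable you have not decided whether an equivariant map exists (higher obstructions are untouched), so the method yields no information either way there.

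For that remaining case your argument is only a programme. Knowing that the set $S$ of realized chord lengths is closed, accumulates at $0$ (via non-local-injectivity and invariance of domain), and is unbounded (via Lyusternik--Fet) does not come close to showing $\delta\in S$ for every $\delta>0$: a closed unbounded subset of $(0,\infty)$ with $\inf S=0$ can omit any prescribed interval, and ``$S$ has no gaps'' is precisely the content of Hopf's theorem, which you explicitly leave as ``the genuine obstacle''. The fold-locus/bordism idea is not developed to a checkable step; in particular one must control how the solution set $\{(x,v):\Phi_\delta(x,v)=0\}$ could vanish entirely as $\delta$ crosses some value, and that is exactly where a global argument of a different nature (degree-theoretic, in the spirit of Hopf's original covering argument) is needed rather than a primary-obstruction computation. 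As written, the proposal establishes the theorem only for manifolds with non-trivial total Stiefel--Whitney class.
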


In \cite{Previous} we proved that the set of pairs of points satisfying the Hopf theorem when $\delta$ is fixed (such that there are no $\delta$-conjugate points) is uncountable (see \cite{Previous} for an exact formulation and proof of quantitative version of the Hopf theorem). However, the topological structure of the set of such pairs was not explored and remains unclear. Understanding this structure is one of the main driving forces for the present work.

The paper is organized as follows. First, in Sections 2 and 3, we extend the Hopf theorem (and its quantitative version) to the case when $M$ is a closed convex hypersurface (with geodesics replaced by quasigeodesics). Precise definitions of quasigeodesics are not necessary for the statements of our results and their proofs. However, definitions can be found in \cite{Alexandrov} (for two-dimensional convex surfaces) and in \cite{Petrunin}, \cite{Petrunin1} (for Alexandrov spaces with curvature $\ge \kappa$, and in particular, for convex hypersurfaces). See also \cite{Closed} for a beautiful exposition of quasigeodesics on two-dimensional convex polyhedra.

Next, in Section 4, we study a `discrete' version of the Hopf theorem concerning peace-wise linear maps of a boundary of a convex polyhedron $P \subset \mathbb{R}^3$ to $\mathbb{R}^2$ and prove several topological results on the structure of ``solutions'' of the Hopf theorem.

\section{Generalization of the Hopf theorem for convex surfaces}

To prove our first statement we need several auxiliary definitions and results. Then the statement follows almost immediately. 

\begin{definition}
    By a closed convex $n$-dimensional surface we mean the boundary of $n$-dimensional convex body in the Euclidean $(n+1)$-space $\mathbb{R}^{n+1}$. We assume that such a surface is provided with induced length metric from $\mathbb{R}^{n+1}$.
\end{definition}

\begin{definition}[Gromov-Hausdorff convergence \cite{Journey}]
    Let $\mathcal{X}_1, \mathcal{X}_2, ...$, and $\mathcal{X}_{\infty}$ be a sequence of complete metric spaces. Suppose that there is a metric $d$ on the disjoint union $$\mathbf{X} = \bigsqcup_{n \in \mathbb{N} \cup \{\infty\}} \mathcal{X}_n$$
    such that the restriction of the metric on each $X_n$ and $X_{\infty}$ coincides with its original metric, and $X_n \to X_{\infty}$ as subsets in X in the sense of Hausdorff. In this case we say that the metric on $\mathbf{X}$ defines convergence $\mathcal{X}_n \xrightarrow{\text{GH}} \mathcal{X}_{\infty}$ in the sense of Gromov-Hausdorff. 
\end{definition}

\begin{definition}
    Let $\mathcal{X}$ and $\mathcal{Y}$ be metric spaces. A map $f \colon X \to Y$ is called an $\varepsilon$-isometry if the following two conditions hold:
    \begin{enumerate}
        \item $\mathcal{X}$ is an $\varepsilon$-net in $\mathcal{Y}$; that is, for any $y \in \mathcal{Y}$ there is $x \in \mathcal{X}$ such that $|f(x)-y|_{\mathcal{Y}} < \varepsilon$.
        \item $\big||f(x) -f(x')|_{\mathcal{Y}} - |x-x'|_{\mathcal{X}} \big|\leq \varepsilon$ for any $x, x' \in \mathcal{X}$. 
    \end{enumerate}
    
\end{definition}

\begin{lemma}[\cite{Journey}]
    Let $\mathcal{X}_1, \mathcal{X}_2, ...$, and $\mathcal{X}_{\infty}$ be complete metric spaces, and $\varepsilon_n \to 0+$ as $n \to \infty$. Suppose that for each $n$ there is an $\varepsilon_n$-isometry $f_n\colon \mathcal{X}_n \to \mathcal{X}_{\infty}$. Then there exists a Gromov-Hausdorff convergence $\mathcal{X}_n \xrightarrow{\text{GH}} \mathcal{X}_{\infty}$. 
\end{lemma}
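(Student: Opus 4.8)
The plan is to build, directly from the maps $f_n$, a metric $d$ on the disjoint union $\mathbf{X} = \bigsqcup_{n \in \mathbb{N} \cup \{\infty\}} \mathcal{X}_n$ that realizes the required convergence. The first step is local: for each finite $n$ I glue $\mathcal{X}_n$ to $\mathcal{X}_\infty$ by the metric
\[
\rho_n(x, z) \;=\; \inf_{x' \in \mathcal{X}_n}\bigl(\,|x - x'|_{\mathcal{X}_n} + |f_n(x') - z|_{\mathcal{X}_\infty}\,\bigr) \;+\; \varepsilon_n, \qquad x \in \mathcal{X}_n,\ z \in \mathcal{X}_\infty,
\]
keeping the original metrics inside $\mathcal{X}_n$ and inside $\mathcal{X}_\infty$. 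Using only the two defining properties of an $\varepsilon_n$-isometry one checks that $\rho_n$ is a genuine metric on $\mathcal{X}_n \sqcup \mathcal{X}_\infty$ extending both originals; the summand $\varepsilon_n$ is there to absorb the $\varepsilon_n$-defect in the distance-distortion inequality, and the infimum cannot be dropped (a naive choice such as $\rho_n(x,z) = |f_n(x) - z| + \varepsilon_n$ fails the triangle inequality $\rho_n(x,z) \le |x - x'|_{\mathcal{X}_n} + \rho_n(x', z)$, precisely by $\varepsilon_n$). From the net property, every $z \in \mathcal{X}_\infty$ lies $\rho_n$-within $2\varepsilon_n$ of some point of $\mathcal{X}_n$, and $\rho_n(x, f_n(x)) \le \varepsilon_n$ for every $x \in \mathcal{X}_n$, so the Hausdorff distance between the two pieces in $(\mathcal{X}_n \sqcup \mathcal{X}_\infty, \rho_n)$ is at most $2\varepsilon_n$.

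The second step assembles the local metrics. I set $d$ equal to $\rho_n$ on $\mathcal{X}_n \sqcup \mathcal{X}_\infty$, keep the originals inside each piece, and for $x \in \mathcal{X}_n$, $y \in \mathcal{X}_m$ with $n \neq m$ both finite put
\[
d(x, y) \;=\; \inf_{z \in \mathcal{X}_\infty}\bigl(\rho_n(x, z) + \rho_m(z, y)\bigr),
\]
so that distinct finite pieces communicate only through the common hub $\mathcal{X}_\infty$. Positivity is immediate since every $\rho_n$-distance between $\mathcal{X}_n$ and $\mathcal{X}_\infty$ is at least $\varepsilon_n > 0$; symmetry is clear. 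The triangle inequality is verified by a case analysis according to which of the pieces $\mathcal{X}_\infty, \mathcal{X}_n, \mathcal{X}_m, \mathcal{X}_k$ contain the three points, and in every case it collapses to the triangle inequality already proved for the two-piece spaces $(\mathcal{X}_n \sqcup \mathcal{X}_\infty, \rho_n)$ together with the single fact that all the $\rho_n$ restrict to the same metric on $\mathcal{X}_\infty$. Since $d$ is defined so that $d|_{\mathcal{X}_n \sqcup \mathcal{X}_\infty} = \rho_n$, it automatically restricts to the original metric on each $\mathcal{X}_n$ and on $\mathcal{X}_\infty$; because $d$ is this explicit function (not an infimum over chains), there is no separate ``restriction is preserved'' issue to address.

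Finally, since $d|_{\mathcal{X}_n \sqcup \mathcal{X}_\infty} = \rho_n$, the Hausdorff distance between $\mathcal{X}_n$ and $\mathcal{X}_\infty$ measured inside $(\mathbf{X}, d)$ coincides with the one measured inside $(\mathcal{X}_n \sqcup \mathcal{X}_\infty, \rho_n)$, hence is at most $2\varepsilon_n \to 0$. Thus $\mathcal{X}_n \to \mathcal{X}_\infty$ as subsets of $\mathbf{X}$ in the Hausdorff sense, which is exactly the statement that $d$ defines a Gromov--Hausdorff convergence $\mathcal{X}_n \xrightarrow{\text{GH}} \mathcal{X}_\infty$. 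I expect the only genuine work to be the bookkeeping in the triangle-inequality check for $d$ between points of different pieces; the conceptual content lies entirely in the choice of $\rho_n$, and in particular in the $+\varepsilon_n$ correction term, with everything else being routine verification.
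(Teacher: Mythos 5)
Your proposal is correct and follows essentially the same route as the paper: the metric $\rho_n$ you define on $\mathcal{X}_n \sqcup \mathcal{X}_\infty$ is exactly the formula $\inf_{y_n}\{|x_n-y_n|_{\mathcal{X}_n}+\varepsilon_n+|f_n(y_n)-x_\infty|_{\mathcal{X}_\infty}\}$ given in the paper's Remark~1, and your rule of routing distances between distinct finite pieces through the hub $\mathcal{X}_\infty$ matches the paper's fourth defining equation. You merely carry out the triangle-inequality and Hausdorff-convergence verifications that the paper leaves to the reference, so there is nothing to flag.
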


\begin{remark}
    To prove Lemma 1 it is enough to construct a metric $d$ on a common space $\mathbf{X}$ and show that $\mathcal{X}_n \to \mathcal{X}$ in the sense of Hausdorff. The metric $d$ can be defined in the following way: 
$$|x_n - y_n|_{\mathbf{X}} = |x_n - y_n|_{\mathcal{X}_n}$$
$$|x_{\infty} - y_{\infty}|_{\mathbf{X}} = |x_{\infty} - y_{\infty}|_{\mathcal{X}_{\infty}}$$
$$|x_{n} - x_{\infty}|_{\mathbf{X}} = \inf_{y_n \in \mathcal{X}_{n}}\{|x_{n} - y_n|_{\mathcal{X}_{n}} + \varepsilon_n + |f_n(y_n) - x_{\infty}|_{\mathcal{X}_{\infty}} \}$$
$$|x_{n} - x_{m}|_{\mathbf{X}} = \inf_{y_{\infty} \in \mathcal{X}_{\infty}}\{|x_{n} - y_{\infty}|_{\mathbf{X}} + |x_{m} - y_{\infty}|_{\mathbf{X}}\}$$
where $x_{\infty}, y_{\infty} \in \mathcal{X}_{\infty}$, and $x_{n}, y_{n} \in \mathcal{X}_{n}$ for each $n$.

\end{remark}

\begin{theorem}[Petrunin, \cite{Petrunin1}]

Assume $\mathcal{X}_n \xrightarrow{\text{GH}} \mathcal{X}_{\infty}$ without collapse (that is dim $\mathcal{X}_n \geq$ dim $\mathcal{X}_{\infty}$). And $\gamma_n \in \mathcal{X}_n$ is a sequence of quasigeodesics which in the $d$-metric (on $\mathbf{X}$) converges pointwise to $\gamma \in \mathcal{X}_{\infty}$. Then $\gamma$ is a quasigeodesic. 

\end{theorem}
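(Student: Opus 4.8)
The plan is to reduce the statement to the distance-comparison characterization of quasigeodesics and to observe that this characterization is a closed condition under the kind of convergence in the statement. I assume, as is implicit, that all the $\mathcal{X}_n$ share one lower curvature bound $\kappa$ (in the application of interest $\kappa=0$); then $\mathcal{X}_\infty$ also has curvature $\ge\kappa$ by the stability of lower curvature bounds under Gromov--Hausdorff limits, so that quasigeodesics are defined in all these spaces. Recall (see \cite{Petrunin1}) that a unit-speed curve $\sigma\colon\mathbb{I}\to\mathcal{X}$ in such a space is a quasigeodesic if and only if for every point $p\in\mathcal{X}$ the function $h_p(t)=\operatorname{md}_\kappa\bigl(|p\,\sigma(t)|\bigr)$ satisfies $h_p''\le 1-\kappa\,h_p$ in the barrier sense; equivalently, on every sufficiently short subinterval $[a,b]\subset\mathbb{I}$ one has $h_p(t)\ge y(t)$ for $t\in[a,b]$, where $y$ solves the linear ODE $y''=1-\kappa y$ with $y(a)=h_p(a)$, $y(b)=h_p(b)$. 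This is exactly the inequality obeyed by distance functions along genuine geodesics, and the point is that in this ``comparison with a model solution'' form it is manifestly stable under limits.

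The first step is to upgrade the given pointwise convergence $\gamma_n(t)\to\gamma(t)$ to uniform convergence on compact subintervals: each $\gamma_n$, being unit-speed, is $1$-Lipschitz as a map into $(\mathbf{X},d)$, so the family $\{\gamma_n\}$ is equi-Lipschitz, an equi-Lipschitz pointwise-convergent family of maps of an interval converges uniformly on compacts, and the limit $\gamma$ is itself $1$-Lipschitz. The second step is to push the comparison inequality to the limit. Fix $p\in\mathcal{X}_\infty$ and pick $p_n\in\mathcal{X}_n$ with $d(p_n,p)\to 0$ (possible since $\mathcal{X}_n\to\mathcal{X}_\infty$ in $(\mathbf{X},d)$ in the sense of Hausdorff). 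The triangle inequality in $(\mathbf{X},d)$ gives
\[
\bigl|\;|p_n\,\gamma_n(t)|-|p\,\gamma(t)|\;\bigr|\ \le\ d(p_n,p)+d(\gamma_n(t),\gamma(t)),
\]
so $h^{(n)}(t):=\operatorname{md}_\kappa(|p_n\,\gamma_n(t)|)\to\operatorname{md}_\kappa(|p\,\gamma(t)|)=:h_p(t)$ (uniformly on compacts). Each $h^{(n)}$ satisfies the comparison inequality, because $\gamma_n$ is a quasigeodesic in the curvature-$\ge\kappa$ space $\mathcal{X}_n$; and in the ``$h^{(n)}\ge$ model solution on short subintervals'' form this passes to the pointwise limit, since the endpoint values converge, the solution of the linear ODE boundary value problem depends continuously (indeed affinely) on its endpoint data, and a non-strict inequality survives a limit. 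As $p$ was arbitrary, $\gamma$ obeys the distance comparison with respect to every point of $\mathcal{X}_\infty$.

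It remains to see that $\gamma$ is parametrized by arc length, and this is the only place where the non-collapsing hypothesis is genuinely used: the comparison inequalities above hold regardless of collapse, but, e.g., a unit-speed geodesic on a thin flat torus collapsing to a circle may converge to a constant (or merely slower) curve, which satisfies all the distance comparisons yet is not a quasigeodesic. Under non-collapsing convergence the spaces of directions $\Sigma_{\gamma_n(t)}\mathcal{X}_n$ do not degenerate, and one can arrange that the right tangent directions $\gamma_n^{+}(t_n)$ of the quasigeodesics $\gamma_n$ --- which are honest unit directions --- subconverge to a unit direction of $\mathcal{X}_\infty$ that is the right tangent direction of $\gamma$ at $\gamma(t)$; together with the $1$-Lipschitz bound this forces $\gamma$ to be parametrized by arc length. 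Granting this, $\gamma$ is a unit-speed curve obeying the distance comparison with respect to every point, hence a quasigeodesic.

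I expect this last point --- promoting the limit curve to a unit-speed one --- to be the main obstacle. It is precisely where ``without collapse'' cannot be dispensed with, and making the convergence-of-directions argument rigorous requires the finer structure of non-collapsing Gromov--Hausdorff limits (convergence of tangent cones and spaces of directions, and the Perelman-type stability that comes with it) rather than the elementary estimates that suffice for the comparison inequality. The remaining ingredients --- the uniform-convergence upgrade and the passage of the barrier inequality to the limit --- are soft and should go through routinely.
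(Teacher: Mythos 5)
This statement is Theorem~2 of the paper, quoted from Petrunin's thesis \cite{Petrunin1} and used as a black box; the paper supplies no proof of it, so there is nothing in-text to compare your argument against, and it has to be judged on its own. Your first two steps are sound and standard: unit-speed curves are $1$-Lipschitz into $(\mathbf{X},d)$, so pointwise convergence upgrades to uniform convergence on compacta and the limit is $1$-Lipschitz; and the characterization of the comparison inequality $h_p''\le 1-\kappa h_p$ via domination of model ODE solutions on short subintervals is manifestly closed under the uniform convergence $h^{(n)}\to h_p$ you establish. (You are also right that a common lower curvature bound must be assumed for the statement to parse; in the paper's application all spaces have curvature $\ge 0$.)

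The genuine gap is the one you flag yourself: showing that $\gamma$ is parametrized by arc length. This is not a routine loose end but the entire content of the theorem --- for $\kappa\le 0$ the comparison with $p=\gamma(t_0)$ yields only the upper bound $|\gamma(t_0)\,\gamma(t)|\le|t-t_0|$, so the barrier inequalities give no lower bound on speed whatsoever, and the constant curve satisfies them all. Your proposed remedy, subconvergence of the right tangent directions $\gamma_n^{+}(t)$ to a unit direction that is then declared to be $\gamma^{+}(t)$, does not work as stated: spaces of directions and tangent cones are only semicontinuous under non-collapsing Gromov--Hausdorff convergence, and, more importantly, even if $\gamma_n^{+}(t)$ subconverges to some unit direction $\xi\in\Sigma_{\gamma(t)}\mathcal{X}_\infty$, identifying $\xi$ with $\gamma^{+}(t)$ (and hence extracting unit speed of $\gamma$) requires a first-order expansion $|\gamma_n(t)\,\gamma_n(t+s)|=s+o(s)$ that is \emph{uniform in} $n$; producing such a uniform estimate for quasigeodesics in a varying sequence of spaces is precisely the assertion being proved. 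So the proposal correctly isolates the difficulty and handles the soft parts, but the essential step is asserted rather than proved, and the sketch offered for it is circular in the sense just described; closing it requires the finer non-collapsing machinery (e.g.\ strainers/distance charts or volume convergence) that constitutes Petrunin's actual argument.
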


Now we are ready to prove the following:

\begin{proposition}\label{th:Hopf}
        Let $M$ be a closed convex $n$-dimensional surface, and let $f\colon M \to \mathbb{R}^n$ be a continuous map. Then for any prescribed $\delta > 0$, there exists a pair $\{x,y\} \in M \times M$ such that $f(x) = f(y)$ and $x$ and $y$ are joined by a quasigeodesic of length~$\delta$.
\end{proposition}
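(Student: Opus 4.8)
The plan is to reduce Proposition~\ref{th:Hopf} for a general closed convex surface $M$ to the case of a convex polyhedron (or a smooth convex hypersurface), where geodesics exist, and then pass to the limit using Gromov--Hausdorff convergence together with Petrunin's theorem on stability of quasigeodesics. First I would approximate $M$ by a sequence of smooth (or polyhedral) convex hypersurfaces $M_k$ with $M_k \xrightarrow{\mathrm{GH}} M$ without collapse: take $M_k = \partial K_k$ where $K_k$ are convex bodies converging to the convex body $K$ bounded by $M$ in the Hausdorff distance (e.g.\ smoothings, or convex hulls of finite nets); the induced length metrics converge, and by Lemma~1 (via explicit $\varepsilon_k$-isometries) this gives a Gromov--Hausdorff convergence realized on a common space $\mathbf{X}$ with metric $d$ as in the Remark.

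Next I would transport the map $f$: since $M_k$ lies in a small neighborhood of $M$, compose a nearest-point projection (or use the $\varepsilon_k$-isometries) to obtain continuous maps $f_k \colon M_k \to \mathbb{R}^n$ with $f_k \to f$ uniformly in the appropriate sense. For smooth (or polyhedral) $M_k$ the classical Hopf theorem applies, so for the prescribed $\delta > 0$ there is a pair $\{x_k, y_k\}$ with $f_k(x_k) = f_k(y_k)$ joined by a geodesic $\gamma_k$ in $M_k$ of length exactly $\delta$; a geodesic is in particular a quasigeodesic. By compactness of $\mathbf{X}$ (a Hausdorff-convergent sequence of compact sets together with its limit is compact), after passing to a subsequence the quasigeodesics $\gamma_k$, reparametrized by arclength on $[0,\delta]$, converge pointwise in the $d$-metric to a curve $\gamma$ in $M$; its endpoints $x,y \in M$ satisfy $d(x,y) \le \delta$, and $\gamma$ has length $\le \delta$. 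By Theorem~2 (Petrunin), $\gamma$ is a quasigeodesic in $M$. Finally, the convergence $f_k \to f$ together with $f_k(x_k)=f_k(y_k)$ forces $f(x) = f(y)$.

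It remains to ensure the limiting quasigeodesic has length exactly $\delta$, not merely $\le \delta$. The point is that $\gamma_k$ has length exactly $\delta$ and the convergence is by arclength parametrizations on the fixed interval $[0,\delta]$; since the $d$-metric restricts to the original metric on each $M_k$ and on $M$, and the $M_k$-distance between consecutive sample points along $\gamma_k$ is the corresponding arclength increment, lower semicontinuity of length under pointwise convergence gives length $\ge \delta$ only if $\gamma$ is nonconstant on the full interval --- which is guaranteed because a unit-speed geodesic on $M_k$ does not collapse in the limit (no collapse of the ambient spaces, and the endpoints of $\gamma_k$ stay at $M_k$-distance $\le \delta$ but the curve sweeps length $\delta$). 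Combined with length $\le \delta$ from the converse estimate, $\gamma$ has length exactly $\delta$, or if its length is strictly smaller one shortens or extends within the quasigeodesic (a quasigeodesic can be prolonged in a convex surface) to achieve the exact value $\delta$; I would phrase the final step using this prolongation property so as to land precisely on length $\delta$.

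The main obstacle, I expect, is the careful handling of this last length-control step: making sure the arclength reparametrizations converge to a curve that is genuinely of length $\delta$ and genuinely a quasigeodesic, rather than a constant curve or a shorter one. This requires knowing that quasigeodesics in closed convex surfaces extend indefinitely (which holds by Alexandrov/Petrunin theory --- quasigeodesics are defined for all time on a compact Alexandrov space), and that the no-collapse hypothesis of Theorem~2 is genuinely satisfied by the approximation, so that $\dim M_k = n = \dim M$ throughout.
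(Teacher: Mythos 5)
Your proposal follows essentially the same route as the paper: approximate $M$ from inside by smooth closed convex hypersurfaces $M_k$, realize the Gromov--Hausdorff convergence by explicit $\varepsilon_k$-isometries (the paper uses the radial projection $R_k$ and pulls $f$ back as $f_k = f\circ R_k$), apply the classical Hopf theorem on each $M_k$, and pass the arclength-parametrized geodesics of length $\delta$ to a limit via Arzel\`a--Ascoli and Petrunin's stability theorem. Your length-control worry is resolved just as you suspect: under the noncollapsing convergence, Petrunin's theorem yields that the pointwise limit of the unit-speed geodesics on $[0,\delta]$ is itself a (unit-speed) quasigeodesic, so its length is exactly $\delta$ and no prolongation argument is needed.
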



\begin{proof}
    Let $B$ be the convex body bounded by $M$. Let $\{M_n\}_{n=1}^{\infty}$ be a sequence of smooth closed convex $n$-dimensional surfaces in $B$ such that $M_n$ converges to $M$ as $n \to \infty$ in the sense of Hausdorff.

    Let $p$ be a point inside $B$. And let $R_n$ be the radial projection with center at $p$ of $M_n$ on $M$. It is easy to see that each $R_n$ is $\varepsilon_n$-isometry for some $\varepsilon_n > 0$  and $\varepsilon_n \to 0$ as $n \to \infty$. Hence the sequence $\{M_n\}_{n=1}^{\infty}$ converges to $M$ in the sense of Gromov-Hausdorff by Lemma 1.

    For each $n > 0$ define a map $f_n = f \circ R_n: M_n \to \mathbb{R}^{n}$. By the Hopf theorem there exists a pair $\{x_n,y_n\} \in M_n \times M_n$ such that $f(x_n) = f(y_n)$ and $x_n$ and $y_n$ are joined by a geodesic of length~$\delta$ in $M_n$. Denote this geodesic parametrized by the arc length as $\gamma_n$. Passing to a subsequence in $\{\gamma_n\}_{n=1}^{\infty}$ if necessary we can assume by Arzela-Ascoli theorem that $\gamma_n$ converges to some continuous curve $\gamma$ of length $\delta$. Putting the metric $d$ from Remark 1 on the common space $\mathbf{X} = M \sqcup M_1 \sqcup M_2 \sqcup ...$, it follows that $\gamma_n$ converges pointwise to $\gamma$ as $n \to \infty$ in the metric $d$. Hence by Petrunin theorem $\gamma$ is quasigeodesic. This completes the proof.
    
\end{proof}

\section{Quantitative version of the Hopf theorem for convex surfaces}

\begin{definition}
	Let $n$ be a positive integer, and let $\delta$ be a positive real number. Let $M$ be a closed convex $n$-dimensional surface, and let $f\colon M \to \mathbb{R}^n$ be a continuous map. We denote by $\mathcal{F}(\delta)$ the subset of $M \times M$ such that $\{a,b\} \in \mathcal{F}(\delta)$ if and only if $f(a) = f(b)$ and the points $a$ and $b$ are joined by a quasigeodesic of length~$\delta$. 
		
    We call points $a, b \in M$ $\delta$-\emph{conjugate} if they are joined by an infinite number of quasigeodesics of length $\delta$. We denote the set of such points by $\mathcal{C}(\delta)$. 
    
\end{definition}
	
\begin{theorem}
    Let $n$ be a positive integer such that $n > 1$, and let $\delta$ be a positive real number. Let $M$ be a closed convex $n$-dimensional surface, and let $f\colon M \to \mathbb{R}^n$ be a continuous map. If $\mathcal{C}(\delta)$ is empty, then $\mathcal{F}(\delta)$ is uncountable. 
\end{theorem}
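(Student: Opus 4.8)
The plan is to reduce the quantitative statement for convex surfaces to the quantitative version of the Hopf theorem for smooth Riemannian manifolds, which was established in \cite{Previous}, using the same approximation scheme as in the proof of Proposition~\ref{th:Hopf}. First I would take the sequence of smooth convex hypersurfaces $M_n \subset B$ with $M_n \to M$ in the Hausdorff (hence Gromov--Hausdorff) sense, the radial projections $R_n$, and the pulled-back maps $f_n = f \circ R_n \colon M_n \to \mathbb{R}^n$. For each $n$, the quantitative Hopf theorem from \cite{Previous} produces an uncountable set $\mathcal{F}_n(\delta) \subset M_n \times M_n$ of pairs joined by a geodesic of length $\delta$ on which $f_n$ coincides, \emph{provided} $M_n$ has no $\delta$-conjugate points; a small additional argument (perturbing $M_n$ within $B$, or invoking genericity of the absence of conjugate points) is needed to arrange this, or alternatively one uses that the quantitative statement in \cite{Previous} already handles the conjugate case by yielding a continuum there as well.

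The key step is then a limiting/covering argument. Since $\mathcal{F}_n(\delta)$ is uncountable for every $n$, and $M_n \times M_n$ is a compact metric space, I would extract from $\bigcup_n (\mathcal{F}_n(\delta) \times \{n\})$ a sequence of pairs $\{x_n, y_n\}$ together with their connecting geodesics $\gamma_n$ of length $\delta$; by Arzel\`a--Ascoli (as in Proposition~\ref{th:Hopf}) pass to a subsequence so that $\gamma_n$ converges to a curve $\gamma$ of length $\delta$ in the common space $\mathbf{X}$ with the metric $d$ from Remark~1, and by the Petrunin theorem $\gamma$ is a quasigeodesic joining a limit pair $\{x,y\}$ with $f(x) = f(y)$. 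The point is to produce \emph{uncountably many} distinct such limit pairs rather than just one. Here I would argue by contradiction: suppose $\mathcal{F}(\delta)$ is countable (indeed, assume for contradiction it is at most countable and $\mathcal{C}(\delta) = \varnothing$). Then for each pair $\{x,y\} \in \mathcal{F}(\delta)$ there are only finitely many quasigeodesics of length $\delta$ joining $x$ to $y$ (by the definition of non-conjugacy), so the set $\Gamma$ of all such quasigeodesics, viewed in the space $C([0,\delta], M)$ of length-$\delta$ curves, is countable. One then shows that every limit curve $\gamma$ arising as above lies in $\Gamma$; hence the countably many curves in $\Gamma$, each being a compact subset of $\mathbf{X}$ with empty interior in the relevant sense, would have to absorb the limits of the uncountable families $\mathcal{F}_n(\delta)$ for infinitely many $n$. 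To turn this into a genuine contradiction I would use a cardinality/Baire-type argument: pick for each $n$ an injection of an uncountable set into $\mathcal{F}_n(\delta)$, organize the connecting geodesics, and show that if all subsequential limits landed in a fixed countable set of quasigeodesics then already some $\mathcal{F}_n(\delta)$ (for large $n$) would be forced to be countable, contradicting \cite{Previous}.

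A cleaner route, which I would actually pursue to avoid the delicate limit-of-a-limit bookkeeping, is to fix a countable dense set $\{q_k\}$ of ``target pairs'' and run the approximation with constraints: for suitable choices one can, for each $k$, select $\{x_n^{(k)}, y_n^{(k)}\} \in \mathcal{F}_n(\delta)$ staying near a prescribed region of $M_n \times M_n$ (using that $\mathcal{F}_n(\delta)$ is not just uncountable but, by the structure in \cite{Previous}, spreads out over a continuum's worth of first coordinates), take limits to obtain pairs $\{x^{(k)}, y^{(k)}\} \in \mathcal{F}(\delta)$ that are pairwise distinct because their first coordinates are, and conclude $\mathcal{F}(\delta) \supseteq \{x^{(k)}, y^{(k)}\}_{k}$ accumulates onto an uncountable set; combined with the fact that the map $\{a,b\} \mapsto f(a)$ restricted to $\mathcal{F}(\delta)$ has the structure dictated by the Hopf construction, one upgrades this to genuine uncountability.

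The main obstacle I anticipate is precisely the passage from ``uncountably many solutions on each $M_n$'' to ``uncountably many solutions on $M$'': the naive limit of an uncountable family is a single pair, so one must carry along enough of the internal structure of $\mathcal{F}_n(\delta)$ (for instance, a continuous or at least uncountable-image parametrization of part of it, which \cite{Previous} provides) through the Gromov--Hausdorff limit, and check that the Petrunin limiting theorem applies uniformly along such a parametrized family. Controlling the non-degeneracy of the limit pairs (that distinct parameters give distinct pairs $\{x,y\}$, not collapsing to a single pair) under only Hausdorff convergence of the $M_n$ is the crux; the hypothesis $\mathcal{C}(\delta) = \varnothing$ on $M$ is what should prevent such collapse, since a collapse would manifest as a pair joined by infinitely many length-$\delta$ quasigeodesics in the limit.
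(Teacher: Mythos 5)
Your proposal correctly identifies the crux --- passing uncountability through the Gromov--Hausdorff limit --- but it does not resolve it, and the specific mechanism you propose for the contradiction does not work. You argue that if all subsequential limits of the solution geodesics landed in a fixed countable family $\Gamma$ of quasigeodesics, then ``already some $\mathcal{F}_n(\delta)$ would be forced to be countable.'' This implication is false: an uncountable family of pairs (and of connecting geodesics) on $M_n$ can perfectly well have all its subsequential limits concentrated on a single limit curve, so no cardinality is transported backwards. Your ``cleaner route'' has the same defect in a different guise: selecting countably many target pairs and taking limits produces at most a countable subset of $\mathcal{F}(\delta)$, and a countable set ``accumulating onto an uncountable set'' does not make $\mathcal{F}(\delta)$ itself uncountable (countable closed sets exist). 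In short, you never convert the hypothesis $\mathcal{C}(\delta)=\varnothing$ into a quantitative statement strong enough to produce uncountably many \emph{distinct} pairs.

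What the paper actually carries through the limit is not the cardinality of $\mathcal{F}_n(\delta)$ but a pointwise covering property extracted from the degree argument of \cite{Previous}. After disposing of two easy cases (if $f(M)$ is a singleton, or if every $\xi$ in the uncountable set $\partial f(M)$ already has a solution pair in its fiber, one is done), it picks $\xi\in\partial f(M)$ whose fiber contains no solution pair, so the maps $T^n_{0,q_n,\delta}$ have degree zero on a uniform neighborhood $U_{\varepsilon_n}(q_n)$; by \cite{Previous} this means that \emph{through every point} of $U_{\varepsilon_n}(q_n)$ passes a geodesic carrying an $f_n$-solution pair at arclength distance $\delta$. The sizes $\varepsilon_n$ are bounded below, so these neighborhoods project onto a fixed open ball $\mathcal{B}\subset M$, and the Arzel\`a--Ascoli/Petrunin limit (as in Proposition~1) is applied \emph{pointwise over $\mathcal{B}$}: through every point of $\mathcal{B}$ passes a quasigeodesic carrying a pair of $\mathcal{F}(\delta)$. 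Now the conclusion is a measure/covering argument: countably many quasigeodesics have $n$-dimensional Lebesgue measure zero and cannot cover $\mathcal{B}$, so if $\mathcal{F}(\delta)$ were countable, some single pair would be joined by uncountably many length-$\delta$ quasigeodesics, i.e.\ $\mathcal{C}(\delta)\neq\varnothing$. This ``through every point of an open set passes a solution curve'' property is exactly the piece of internal structure you gesture at but do not pin down; without it (or an equivalent substitute) your argument has a genuine gap.
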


\begin{proof}
    The proof closely follows the corresponding argument for the same statement for Riemannian manifolds (proof of Theorem 5, Section 2 \cite{Previous}). 

    Let $B$ be the convex body bounded by $M$. Let $\{M_n\}_{n=1}^{\infty}$ be a sequence of smooth closed convex $n$-dimensional surfaces in $B$ such that $M_n$ converges to $M$ as $n \to \infty$ in the sense of Hausdorff. Let $p$ be a point inside $B$. And let $R_n$ be the radial projection with center at $p$ of $M_n$ on $M$.

   If $f(M)$ is a singleton, then the result easily follows \footnote{There is an infinite quasigeodesic starting in any direction (from a tangent cone) from any point \cite{Petrunin} of $M$.}. Otherwise $\partial{f(M)}$ is uncountable (since $f(M)$ is bounded and connected). 

    If no $\xi$ in $\partial f(M)$ yields $(f^{-1}(\xi) \times f^{-1}(\xi)) \cap F(\delta) = \emptyset$, then the statement easily follows. Otherwise, take $\xi \in \partial f(M)$ such that $(f^{-1}(\xi) \times f^{-1}(\xi)) \cap F(\delta) = \emptyset$ and let $q$ be a point in $f^{-1}(\xi)$. Let $q_n = R_n^{-1}(q)$. 

    Passing to a subsequence in $\{M_n\}_{n=1}^{\infty}$ if necessary, we can assume that there exists $r > 0$ such that the image of $T_{0,q_n,\delta}^{n}$ (see the proof of Theorem 5 in \cite{Previous} for the definition of $T_{s,q,\delta}$ \footnote{Let $p \in M_n$, and let $T_pM_n$ be the tangent space at $p$. Choose a standard basis in $T_pM_n$, and let $\mathbb{S}_{p}^{n-1} \subset T_pM_n$ be the unit sphere. For each $\mathfrak{F} \in \mathbb{S}_{p}^{n-1}$, there is a unique geodesic with the tangent vector $\mathfrak{F}$ at $p$, which we denote by $\gamma_{\mathfrak{F}}$. Study the vector $V(\mathfrak{F})_{0, p, \delta} := f(\gamma_{\mathfrak{F}}(\delta)) - f(p)$ and let  $v(\mathfrak{F})_{0, p, \delta} = \frac{V(\mathfrak{F})_{0, p, \delta}}{|V(\mathfrak{F})_{0, p, \delta}|}$, where $|V(\mathfrak{F})_{0, p, \delta}|$ denotes the Euclidean norm of $V(\mathfrak{F})_{0, p, \delta}$ in $\mathbb{R}^n$. Since $|V(\mathfrak{F})_{0, p, \delta}|$ is not zero by assumption, we get continuous maps $T_{0,p,\delta}^{n} \colon \mathbb{S}_{p}^{n-1} \to \mathbb{S}^{n-1}$.}) does not intersect a closed $n$-dimensional ball of radius $r$ with the center at $f(q)$. Note also that the degree of $T_{0,q_n,\delta}^{n}$ is zero \cite{Previous}, and hence from continuity of $f_n$ it follows that there exists $\varepsilon_{n} > 0$ such that for each $p \in U_{\varepsilon_{n}}(q_n)$, the degree of ${T}_{0,p,\delta}^{n}$ is zero as well, where $U_{\varepsilon_n}(q_n)$ is the $\varepsilon_n$-neighborhood of $q_n$ in $M_n$. We choose $\varepsilon_{n}$ as the maximal possible values for which the condition on degree of ${T}_{0,p,\delta}^{n}$ is satisfied. 

    From continuity of $f$ it follows that the sequence $\{\varepsilon_n\}_{n=1}^{\infty}$ is bounded from below. Hence the intersection of $R_n(U_{\varepsilon_{n}}(q_n))$ has non-empty interior and contains an open ball $\mathcal{B}$ of radius $r_1 > 0$. Let $U_n(q_n) = R_n^{-1}(\mathcal{B})$. Since through every point inside $U_n(q_n)$ passes a geodesic $\gamma^n_p$ containing $f_n$-neighbors with distance $\delta$ between them along $\gamma^n_p$ \cite{Previous}, by applying a similar argument as in the proof of Proposition 1 we get that through each point inside $\mathcal{B}$ passes a quasigeodesic $\gamma_p$ containing points in some pair in $\mathcal{F}(\delta)$ (the distance between points in this pair along $\gamma_p$ is $\delta$). 
    
    The rest of the proof is similar to \cite{Previous}: observe that no at most countable collection of quasigeodesics in $M$ covers $\mathcal{B}$, since any quasigeodesic has zero $n$-dimensional Lebesgue measure. Now if $\mathcal{F}(\delta)$ is at most countable, then there is a pair of points $\{\mu_1, \mu_2\} \in \mathcal{F}(\delta)$ such that $\mu_1$ and $\mu_2$ are joined by uncountably many quasigeodesics of length $\delta$ and hence $\mathcal{C}(\delta)$ is not empty. This completes the proof. \qedhere

\end{proof}
There is an interesting future direction of generalization of Hopf's original argument to the case of $n$-dimensional Alexandrov spaces $M$ of curvature $\ge \kappa$ without boundary. Is it possible to construct a continuous map, an analogue of exponential map, from an $n$-dimensional open ball to the Alexandrov space, such that the center of a ball is mapped to $p \in M$ and radial curves are mapped to quasigeodesics, starting from $p$ (different radial curves could be mapped to the same quasigeodesics)?
\section{Discrete version of the Hopf theorem}

Let $P$ be a triangulation \footnote{Recall that triangulation of topological space is simplicial complex homeomorphic to it. We take as a homeomorphism the identity map.} of the boundary of a convex polyhedron in $\mathbb{R}^3$. And let $f\colon P \to \mathbb{R}^2$ be a continuous map which is affine on each triangle of $P$ \footnote{$f$ can be given at vertices of $P$ and extended to $P$ by affine extension.}. Next, we assume that $f$ is a map of general position \footnote{No three vertex images lie on the same line.}. We recall that two points $a$ and $b$ of $P$ are called $f$-neighbors if $f(a) = f(b)$ \footnote{For convenience, we do not require that $a$ and $b$ are distinct.}. If $d$ is a metric on $P$, we say that $f$-neighbors $a$ and $b$ \textit{realize} distance $\delta \geq 0$ if $d(a,b) = \delta$. For $\delta > 0$ we define $\mathfrak{D}(\delta)$ to be the set of $f$-neighbors realizing distance $\delta > 0$. 

First we define the `induced by $f$' triangulation of $P$ by the following algorithm.

\begin{algorithm}
\caption{`Induced by $f$' triangulation of $P$}
\begin{algorithmic}[1]
    \State Denote by $T$ the triangulation of $f(P)$, which is constructed by the following three steps. 

    \begin{enumerate}
        \item Define \textit{vertex set of $T$} as intersection points of images of distinct edges of $P$. 
        \item Define \textit{constraints} $C$ as the set of line segments which arise from subdivision (induced by vertex set of $T$) of image of each edge of $P$. 
        \item Create constrained Delaunay triangulation $CD$ of $f(P)$ with constraints $C$. Triangulation $T$ of $f(P)$ is defined as the family of those simplices of $CD$ that belong to $f(P)$. 
    \end{enumerate}

    \State Construct `induced by $f$' triangulation of $P$ as the family of triangles (and their edges and vertices), which arise by the following two steps: 

    \begin{enumerate}
        \item Choose triangle $t$ of $T$.
        \item Take the closure of preimage of relative interior of $t$.
    \end{enumerate}

\end{algorithmic}
\end{algorithm}

\begin{figure}[H]
    \centering
    \begin{subfigure}[b]{0.25\textwidth}
        \centering
        \includegraphics[width=\linewidth]{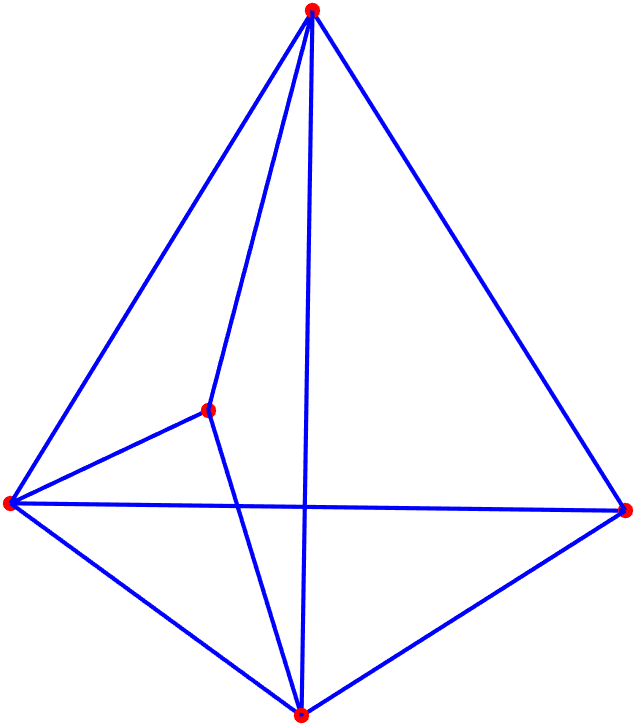}
        \label{fig:first}
    \end{subfigure}
    \hspace{1cm} 
    \begin{subfigure}[b]{0.25\textwidth}
        \centering
        \includegraphics[width=\linewidth]{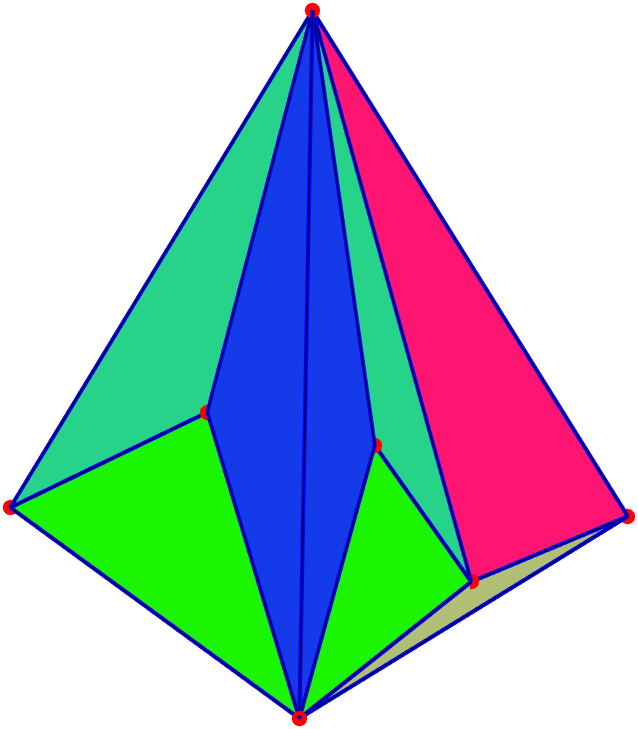}
        \label{fig:second}
    \end{subfigure}
    \caption{Initial and `induced by $f$' triangulations of $P$. Triangles, whose images coincide, are colored identically.}
    \label{fig:side_by_side}
\end{figure}

We denote the `induced by $f$' triangulation of $P$ as $T_{f}P$. An example of such triangulation is shown in Fig. 1. To state our main result of this section we need several auxiliary definitions, constructions and results.

For a given map $f$ we introduce the notion of \textit{a complex of $f$-neighbors}. 

\begin{definition}[Complex of $f$-neighbors]

For any two distinct triangles $\mathcal{A}$ and $\mathcal{B}$ of $T_{f}P$, whose images coincide, we define two triangles of corresponding $f$-neighbors in $P \times P \subset \mathbb{R}^3 \times \mathbb{R}^3$, which we denote by $[\mathcal{A}, \mathcal{B}]$ and $[\mathcal{B}, \mathcal{A}]$ (we will denote corresponding edges in the same way). We construct complex of $f$-neighbors $\mathfrak{F}_f$ as a family of such triangles, their edges and vertices. Notice that $\mathfrak{F}_f$ coincides with the closure (in $P \times P$) of the set of $f$-neighbors realizing non-zero distances.  
    
\end{definition}


\begin{lemma}
    Let $P$ be a triangulation of the boundary of a convex polyhedron in Euclidean space $\mathbb{R}^3$. And let $f\colon P \to \mathbb{R}^2$ be a continuous map which is affine on each triangle of $P$ and of general position. Then the associated complex of $f$-neighbors $\mathfrak{F}_f$ is a simplicial complex and any edge of $\mathfrak{F}_f$ belongs to exactly two triangles of $\mathfrak{F}_f$ \footnote{The MATLAB implementation of construction of $T_fP$ and $\mathfrak{F}_f$ is available upon request.}. 
\end{lemma}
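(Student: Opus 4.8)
The plan is to analyze $\mathfrak{F}_f$ locally, point by point in $P\times P$, and reduce the "exactly two triangles per edge" statement to a local combinatorial count on the base polyhedron $P$. First I would observe that, since $f$ is affine on each triangle of the initial triangulation and of general position, the induced triangulation $T_fP$ is a genuine (geometric) simplicial complex: the constrained Delaunay triangulation $CD$ of $f(P)$ is a simplicial complex, and pulling back relative interiors of its triangles and taking closures subdivides each original triangle of $P$ into finitely many convex linear cells which are themselves triangulated; general position guarantees no degenerate (collapsed) images and that the combinatorial identifications among triangles of $T_fP$ induced by "images coincide" are well defined. The triangles $[\mathcal A,\mathcal B]$ sitting in $P\times P$ are graphs of the affine identification $\mathcal A\to\mathcal B$ over the common image, hence honest linearly embedded triangles in $\mathbb R^3\times\mathbb R^3$; two such triangles meet along a common face precisely when the corresponding pairs of triangles in $T_fP$ share matching faces with coinciding image, so $\mathfrak F_f$ is a simplicial complex. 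The identification of $\mathfrak F_f$ with the closure of the set of $f$-neighbors at nonzero distance is immediate from the construction, since off the diagonal an $f$-neighbor pair $(a,b)$ with $f(a)=f(b)$ lies in the relative interiors of a unique pair of cells of $T_fP$ with common image.

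For the "exactly two triangles" claim, fix an edge $e=[\mathcal E_1,\mathcal E_2]$ of $\mathfrak F_f$, where $\mathcal E_1,\mathcal E_2$ are edges of $T_fP$ with $f(\mathcal E_1)=f(\mathcal E_2)=:\ell$, a segment in $\mathbb R^2$. A triangle of $\mathfrak F_f$ containing $e$ is of the form $[\mathcal A,\mathcal B]$ with $\mathcal E_1$ a side of $\mathcal A$, $\mathcal E_2$ a side of $\mathcal B$, and $f(\mathcal A)=f(\mathcal B)$ a triangle $t$ of $T$ having $\ell$ as an edge. So the count of triangles of $\mathfrak F_f$ through $e$ equals the number of pairs $(\mathcal A,\mathcal B)$ with $\mathcal A\supset\mathcal E_1$, $\mathcal B\supset\mathcal E_2$, $f(\mathcal A)=f(\mathcal B)=t$. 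The key point is that this factorizes: it is the product of (the number of triangles $\mathcal A$ of $T_fP$ with side $\mathcal E_1$ and $f(\mathcal A)=t$) and (the number of triangles $\mathcal B$ with side $\mathcal E_2$ and $f(\mathcal B)=t$) — unless $\mathcal E_1=\mathcal E_2$, i.e. $e$ lies on the diagonal, which is excluded because $\mathfrak F_f$ is the closure of neighbors at nonzero distance. Thus I must show each of these two factors equals exactly $1$ when $\mathcal E_i$ is an interior edge of $T_fP$ mapping to an interior edge of $T$, and handle boundary cases separately.

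The crux is therefore the following local statement: if $\mathcal E$ is an edge of $T_fP$ and $t$ a triangle of $T$ with $f(\mathcal E)\subset\partial t$, then there is exactly one triangle $\mathcal A$ of $T_fP$ with $\mathcal E\subset\partial\mathcal A$ and $f(\mathcal A)=t$. Here I would use that $f$, being the affine quotient map on each initial triangle, is a local homeomorphism onto its image away from images of vertices, together with convexity/orientation: $P$ is a closed surface (sphere), $f$ restricted to the interior of any cell of $T_fP$ is injective, and near a point in the relative interior of $\mathcal E$ that is not a vertex image, $P$ has exactly two local sheets mapping to the two sides of $\ell$ locally — but only one of those sheets lies on the side where $t$ is. Running over the (finitely many) sub-segments and checking that the sheet structure is constant along $\mathcal E^\circ$ (no branching, because vertices of $T_fP$ already record all the combinatorial changes) gives uniqueness; existence is the Hopf/Borsuk–Ulam-flavored fact that $f(\mathcal E)$ being a boundary edge of $t\in T$ forces a preimage triangle on the correct side. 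The main obstacle I anticipate is the careful bookkeeping of degenerate configurations allowed even under general position — e.g. an edge $\mathcal E_1$ of $\mathcal A$ whose image is a boundary edge of $f(P)$, or several triangles of $T_fP$ with the same image all meeting along $\mathcal E_i$ — and verifying that in every such case the product count still comes out to exactly $2$; this is where one genuinely uses that $P$ bounds a convex body in $\mathbb R^3$ (so local sheet counts are controlled) rather than an arbitrary polyhedral surface.
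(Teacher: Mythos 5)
Your overall strategy --- a local count of the triangles of $\mathfrak{F}_f$ containing a fixed edge $e=[\mathcal E_1,\mathcal E_2]$, exploiting that each edge of $T_fP$ has exactly two adjacent triangles on the closed surface $P$ --- is the same kind of argument the paper makes, but the cases you set aside are exactly where the content of the lemma lies, and one of them you dismiss incorrectly. The diagonal case $\mathcal E_1=\mathcal E_2$ is \emph{not} excluded by $\mathfrak F_f$ being the closure of the off-diagonal $f$-neighbors: whenever two adjacent triangles $\mathcal A,\mathcal B$ of $T_fP$ are folded onto the same image (and such folds necessarily occur for a general-position map of a closed surface to the plane), the triangle $[\mathcal A,\mathcal B]$ has the diagonal segment $[l,l]$, $l=\mathcal A\cap\mathcal B$, as one of its edges; that edge belongs to $\mathfrak F_f$ and is contained in exactly the two triangles $[\mathcal A,\mathcal B]$ and $[\mathcal B,\mathcal A]$. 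This is the paper's case (1), and it is essential later (it is where the polygonal path of Theorem 4 reaches a pair of identical points), so it cannot be discarded.

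Moreover, your ``crux'' local statement --- for each triangle $t$ of $T$ with $f(\mathcal E)\subset\partial t$ there is exactly one triangle of $T_fP$ over $t$ with side $\mathcal E$ --- is false at fold edges: there the two triangles of $T_fP$ adjacent to $\mathcal E$ both map to the same $t$, so the count is $2$ for one $t$ and $0$ for the other. The correct assertion is that the sum $\sum_t N_1(t)N_2(t)$ equals $2$ because at most one of $\mathcal E_1,\mathcal E_2$ can be folded; this is precisely what the paper extracts from general position, with the subcases ``no foldings'' giving $1\cdot1+1\cdot1$ and ``one folding'' giving $2\cdot1+0\cdot1$, while a double folding onto the same $t$ would give $4$ and must be ruled out. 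You flag these configurations as obstacles to verify but never verify them, and your appeal to convexity of the bounding body for controlling sheet counts is not what is used --- only the closed-surface property of $P$ and the general-position hypothesis enter. As written, the proposal proves the generic case and defers exactly the cases that constitute the paper's proof.
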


\begin{proof}
    The first part of the statement follows from the construction of $\mathfrak{F}_f$. Next we focus on the second part. Let $[\mathcal{A}, \mathcal{B}]$ be a triangle in $\mathfrak{F}_f$ and let $l$ be an edge of $[\mathcal{A}, \mathcal{B}]$. There are the edges $l_{\mathcal{A}}$ and $l_{\mathcal{B}}$ in triangles $\mathcal{A}$ and $\mathcal{B}$ respectively such that $l = [l_{\mathcal{A}}, l_{\mathcal{B}}]$. We have two cases:
    \begin{enumerate}
        \item If $l_{\mathcal{A}}$ coincide with $l_{\mathcal{B}}$, then triangles $[\mathcal{A}, \mathcal{B}]$ and $[\mathcal{B}, \mathcal{A}]$ share the edge $l$. 
        \item If $l_{\mathcal{A}}$ and  $l_{\mathcal{B}}$ are distinct, denote by $\mathcal{A}_1$ and $\mathcal{B}_1$ triangles adjacent to edges $l_{\mathcal{A}}$ and  $l_{\mathcal{B}}$ respectively besides $\mathcal{A}$ and $\mathcal{B}$. It follows that triangles $\mathcal{A}, \mathcal{A}_1, \mathcal{B}, \mathcal{B}_1$ are all distinct. Since $f$ is of general position we have two subcases:

        \begin{enumerate}[label=(\alph*)]
            \item `no foldings': $f(\mathcal{A}_1)$ coincides with $f(\mathcal{B}_1)$, but not with $f(\mathcal{A})$ and $f(\mathcal{B})$. In this subcase triangles $[\mathcal{A}, \mathcal{B}]$ and $[\mathcal{A}_1, \mathcal{B}_1]$ share the edge $l$. 
            \item `one folding': either $f(\mathcal{A}_1)$ or $f(\mathcal{B}_1)$ (but not both) coincides with $f(\mathcal{A})$ and $f(\mathcal{B})$. In this subcase either triangles $[\mathcal{A}, \mathcal{B}]$ and $[\mathcal{A}_1, \mathcal{B}]$, or either triangles $[\mathcal{A}, \mathcal{B}]$ and $[\mathcal{A}, \mathcal{B}_1]$, share the edge $l$.
        \end{enumerate}
    \end{enumerate}

    We observe that in each case only the specified pair of triangles share the edge $l$. 
    
\end{proof}

\begin{lemma}
    Let $n$ be a positive integer. Let $M$ be a closed manifold of dimension $n$ with non-trivial $\mathbb{Z}_2$ action. And let $\mathbb{S}^n$ be a Euclidean unit $n$-sphere in the Euclidean $(n + 1)$-space $\mathbb{R}^{n+1}$ with standard antipodal $\mathbb{Z}_2$ action. Let $f \colon M \to \mathbb{S}^n$ be a continuous map of an odd degree. Then $f$ is $\mathbb{Z}_2$-equivariant at least at one point of $M$. 
\end{lemma}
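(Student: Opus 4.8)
The plan is to argue by contradiction: suppose $f$ is nowhere $\mathbb{Z}_2$-equivariant, i.e. $f(\sigma x) \neq \sigma f(x)$ for every $x \in M$, where $\sigma$ denotes both involutions. The idea is to use this hypothesis to produce a homotopy of $f$ to a map that lands in a proper (hence contractible up to the action) part of $\mathbb{S}^n$, forcing the degree to have a definite parity conflicting with oddness. Concretely, for each $x$ the two points $f(\sigma x)$ and $\sigma f(x) = -\sigma'(f(x))$ — here I mean $f(\sigma x)$ and the antipode-image $-f(x)$ reflected appropriately; let me set it up cleanly: consider instead $g\colon M \to \mathbb{S}^n$, $g(x) = -\sigma\bigl(f(\sigma x)\bigr)$, which is the ``equivariantization obstruction'' companion of $f$. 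The assumption says $g(x) \neq f(x)$ for all $x$, so $f$ and $g$ are never antipodal-free... wait, they are never equal, which means the pair $(f(x), g(x))$ always avoids the diagonal, equivalently $f(x)$ and $-g(x)$ are never antipodal, so the straight-line homotopy through $\mathbb{R}^{n+1}$ from $f$ to $g$ (normalized) is well defined and shows $f \simeq g$.

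First I would record the basic degree computation: $g = A \circ f \circ \sigma$ where $A\colon \mathbb{S}^n \to \mathbb{S}^n$ is the composition of the antipodal map with the given isometric involution realizing the action on the target sphere; since the target action is the standard antipodal action, in fact $g = (-\mathrm{id}) \circ f \circ \sigma \circ (-\mathrm{id})$ up to bookkeeping. Thus $\deg g = (\pm 1)\cdot \deg f \cdot \deg(\sigma|_M)$, and the key point is that a non-trivial free-ish involution contributes a sign, so that $\deg g = -\deg f$ (this is where I must be careful about whether the $\mathbb{Z}_2$-action on $M$ is free; the statement only says ``non-trivial'', so the relevant fact is that $\deg(\sigma_M) = \pm 1$ and the parity of $\deg g$ equals the parity of $\deg f$). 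Then $f \simeq g$ gives $\deg f = \deg g$, and combined with $\deg g \equiv \deg f \pmod 2$ this is consistent — so the naive parity argument is not quite enough, and I need the sharper statement $\deg g = -\deg f$, which together with $\deg f = \deg g$ forces $\deg f = 0$, even, contradicting oddness.

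The second, more robust route — and the one I would ultimately write — is equivariant-topological rather than degree-theoretic: if $f$ is nowhere equivariant, then $h(x) := $ (the normalized geodesic midpoint construction, or the normalized difference $\tfrac{f(x) - g(x)}{|f(x) - g(x)|}$ viewed in $\mathbb{R}^{n+1}$) defines a map $M \to \mathbb{S}^n$, and one checks $h(\sigma x) = -h(x)$, i.e. $h$ is genuinely $\mathbb{Z}_2$-\emph{equivariant} for the standard antipodal action on the target. By the Borsuk–Ulam / equivariant cohomology argument (the mod-2 degree of an equivariant map between free $\mathbb{Z}_2$-spaces of the same dimension is forced, or: $h$ equivariant implies $h^*$ is nonzero on top mod-2 cohomology of the quotients, hence $\deg h$ is odd), one gets that $\deg h$ is odd. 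But $h$ is homotopic to $f$ by the same straight-line homotopy (both avoid a common antipodal pair), so $\deg f = \deg h$ is odd... which is \emph{not} a contradiction. So the genuine contradiction must come from comparing $h$ and $f$ differently — namely $h$ being equivariant while the hypothesis was built to preclude any equivariant structure compatible with $f$. Let me instead close as follows.

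\textbf{Main line I would carry out.} Suppose $f(\sigma x) \neq \sigma f(x)$ for all $x$. Since the target action is antipodal, $\sigma f(x) = -f(x)$ in the relevant normalization is false in general — rather, let $\tau$ be the target involution; $\sigma f(x) = \tau(f(x))$ and $\tau$ is conjugate to $-\mathrm{id}$, WLOG $\tau = -\mathrm{id}$. So the hypothesis reads $f(\sigma x) \neq -f(x)$, i.e. $f(\sigma x)$ and $f(x)$ are never antipodal. Therefore the map $F(x) := \dfrac{f(x) + f(\sigma x)}{|f(x) + f(\sigma x)|}$ is well defined on $M$, takes values in $\mathbb{S}^n$, and satisfies $F(\sigma x) = F(x)$, i.e. $F$ factors through the quotient $M/\sigma$. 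The straight-line homotopy $t \mapsto \dfrac{(1-t)f(x) + t\, f(\sigma x)}{|\cdots|}$ is valid (numerator never zero, as $f(x), f(\sigma x)$ non-antipodal and both unit), showing $f \simeq (x \mapsto f(\sigma x)) = f\circ\sigma$, hence $\deg f = \deg f \cdot \deg \sigma_M$, so $\deg\sigma_M = 1$; and further $f \simeq F$ where $F$ is constant on $\sigma$-orbits, so $\deg f = \deg F$. Now $F = \bar F \circ \pi$ with $\pi\colon M \to M/\sigma$ the (possibly branched, here I use non-triviality of the action to say $\pi$ is a degree-$2$ branched cover or genuine double cover) quotient, hence $\deg F = 2 \deg \bar F$ is even, contradicting $\deg f$ odd. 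The main obstacle, which I would need to handle with care, is the interaction between the action possibly having fixed points and the claim $\deg F \in 2\mathbb{Z}$: if $\sigma_M$ is free this is immediate from $F$ factoring through a genuine double cover; if not, I would either invoke that a continuous map factoring through $M/\sigma$ still has even degree because generic fibers of $M \to M/\sigma$ have even cardinality (fixed set has measure zero, indeed positive codimension since the action is an involution on a manifold), or, cleanest, reduce at the outset to the free case by noting the fixed-point set of an orientation-issue-free involution... — in any case this parity-of-degree-through-a-quotient step is the crux and is where I would spend the most effort, likely phrasing it via the transfer homomorphism or via a generic value argument.
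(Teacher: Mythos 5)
Your main line is exactly the paper's argument: since $f(x)$ and $f(\sigma x)$ are never antipodal, slide $f$ along shortest arcs to the $\sigma$-invariant midpoint map $F$, and conclude that $\deg F$ must be even because $F$ is constant on orbits, contradicting oddness. The paper justifies that evenness step by passing to a simplicial approximation over a $\mathbb{Z}_2$-invariant triangulation rather than by the transfer/generic-value argument you sketch (and is no more detailed there than you are), so the approach is essentially the same.
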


\begin{proof}
    Suppose to the contrary that $f$ is not $\mathbb{Z}_2$-equivariant at any point of $M$. Then we can homotope $f$ to $f'$ moving points $f(x)$ and $f(\mathbb{Z}_2(x))$ to the midpoint of the shortest arc connecting them for each $x \in M$. By simplcial approximation theorem it follows that
    there exits a sufficiently small triangulation $T(M)$ of $M$ such that $f'$ is homotopic to a continuous map $f''$, which agrees with $f'$ at vertices of $T(M)$ and affinely extended to each simplex of $T(M)$. By a further subdivision of $T(M)$ we can assume that the set of vertices of $T(M)$ is $\mathbb{Z}_2$ invariant. The map $f''$ has an even degree by construction. This a contradiction.

\end{proof}

\begin{definition}[Antipodal points for closed convex surface]

Let $B$ be an $(n+1)$-dimensional convex body in $\mathbb{R}^{n+1}$ and $P$ be its boundary. And let $p$ be a point in $\Int B$ \footnote{We denote by $\Int X$ the interior of a topological space $X$.}. There is natural $\mathbb{Z}_2$ action on $P$ associated with the point $p$: if $l$ is a line through $p$, then its intersection points with $P$ are reversed by the action. Let $d$ be a metric on $P$. We set

$$\mathcal{D}(P, d) = \sup_{p \in \Int B}{\inf_{x \in P} {d(x, \mathbb{Z}_{2}(x))}}$$

The point $p$, for which supremim above is achieved, we call the center of $P$. Points $a$ and $b$ are called \textit{antipodal} if $a = \mathbb{Z}_{2}(b)$. By Borsuk-Ulam theorem for any continuous map $f\colon P \to \mathbb{R}^n$ there are always exist antipodal points $a$ and $b$ such that $f(a) = f(b)$ and $d(a,b) \geq \mathcal{D}(P,d)$. 
\end{definition}

\begin{theorem}[Discrete Hopf]
    Let $(P,d)$ be a triangulation of the boundary of a convex polyhedron in Euclidean space $\mathbb{R}^3$, with metric $d$, compatable with topology of $P$. And let $f\colon P \to \mathbb{R}^2$ be a continuous map which is affine on each triangle of $P$ and of general position. Then there exists a polygonal path of $f$-neighbors in $\mathfrak{F}_f$ that connects a pair of antipodal points with a pair of identical points. 
\end{theorem}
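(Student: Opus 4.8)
The plan is to run a parity/degree argument on the $1$-dimensional complex $\mathfrak{F}_f$, exploiting the fact (Lemma 3) that every edge of $\mathfrak{F}_f$ lies in exactly two triangles. Consider the diagonal map $\Delta \colon P \to \mathfrak{F}_f$ sending $x$ to the pair of $f$-neighbors containing $x$ — more precisely, look at the subcomplex $\mathcal{D} \subset \mathfrak{F}_f$ swept out by pairs $\{x, \mathbb{Z}_2(x)\}$; since antipodal $f$-neighbors exist by Borsuk--Ulam (the final paragraph of Definition 8), this locus is nonempty, and likewise the ``diagonal'' pairs $\{x,x\}$ form the boundary stratum of $\mathfrak{F}_f$ where the realized distance degenerates to $0$. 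I want to show some connected component of $\mathfrak{F}_f$ contains both an antipodal pair and an identical pair.

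First I would set up the $\mathbb{Z}_2$-action on $\mathfrak{F}_f$ given by $[\mathcal{A},\mathcal{B}] \mapsto [\mathcal{B},\mathcal{A}]$, which is free away from the identical-pair stratum, and observe that the distance function $\delta(a,b) = d(a,b)$ is a continuous $\mathbb{Z}_2$-invariant function on $\mathfrak{F}_f$ whose zero set is exactly the identical pairs. Next, I would argue combinatorially: orient things so that ``walking along $\mathfrak{F}_f$'' is forced to continue — this is where Lemma 3 is the workhorse. The exactly-two-triangles-per-edge property means $\mathfrak{F}_f$ is a closed surface (pseudomanifold) of dimension $2$; on it, the path of $f$-neighbors we seek is obtained by starting at the antipodal pair $\{a, \mathbb{Z}_2(a)\}$ guaranteed by Borsuk--Ulam and following a gradient-like or level-set trajectory of $\delta$ downward. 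The key point is that such a trajectory cannot terminate in the interior of $\mathfrak{F}_f$ (no dead ends, by Lemma 3) and cannot close up into a loop without meeting the zero set of $\delta$, by a degree/mod-2 count — here Lemma 4 (odd-degree maps to spheres are equivariant somewhere) or a direct mod-$2$ intersection argument on the surface $\mathfrak{F}_f$ should close the gap. Concretely, one can cut $\mathfrak{F}_f$ along the identical-pair stratum and along the antipodal-pair stratum, and show the resulting cobordism class is nontrivial, forcing a component joining the two strata.

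The main obstacle, I expect, is making the ``follow the trajectory'' step rigorous: $\delta$ need not be a Morse function, its level sets on the polyhedral complex $\mathfrak{F}_f$ can be complicated, and one must ensure the trajectory is an honest \emph{polygonal path of $f$-neighbors} (i.e.\ stays in $\mathfrak{F}_f$ and is piecewise-linear) rather than a mere continuum. The clean way around this is probably to avoid $\delta$ altogether and argue purely combinatorially on the graph whose vertices are the triangles $[\mathcal{A},\mathcal{B}]$ of $\mathfrak{F}_f$: use Lemma 3 to show that the ``antipodal'' triangles and the ``identical-pair-adjacent'' triangles have opposite parities of some $\mathbb{Z}_2$-invariant, then invoke a handshake-lemma argument (odd number of odd-degree vertices is impossible) to force a path between the two distinguished sets, exactly as in classical proofs of Sperner-type and Hopf-type results. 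Lemma 4 is the likely source of the parity input: the map from the antipodal locus to $\mathbb{S}^1$ has odd degree, so by Lemma 4 it is forced to be equivariant somewhere, and tracing what this means on $\mathfrak{F}_f$ should produce the required terminal identical pair at the other end of the path.
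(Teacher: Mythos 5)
There is a genuine gap, and it sits exactly at the heart of the theorem: you never ensure that an antipodal pair and an identical pair lie in the \emph{same connected component} of the complex of $f$-neighbors. Starting from the Borsuk--Ulam antipodal pair and ``descending along $\delta$'' proves nothing, because that pair may well live in a component of $\mathfrak{F}_f$ that never meets the diagonal (components of $\mathfrak{F}_f$ can be closed surfaces of any genus disjoint from the diagonal --- see the ``Starship Enterprise'' remark in the paper); level sets and gradient trajectories of $\delta$ stay inside a component, so no amount of care about Morse-theoretic technicalities rescues the plan. The handshake/parity alternative you sketch is likewise unanchored: you do not say what the $\mathbb{Z}_2$-invariant is or why the two distinguished sets of triangles would have opposite parity, and applying the equivariance lemma to ``the map from the antipodal locus to $\mathbb{S}^1$'' is not a well-defined move.

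The missing idea is to single out the right component first and only then hunt for the antipodal pair inside it. Since $f$ is of general position there is a ``folding'': a pair of adjacent triangles $\mathcal{A},\mathcal{B}$ of $T_fP$ with $f(\mathcal{A})=f(\mathcal{B})$ and no third triangle sharing that image. The component $\mathcal{L}$ containing $[\mathcal{A},\mathcal{B}]$ then (i) touches the diagonal of $P\times P$ (along the common edge of $\mathcal{A}$ and $\mathcal{B}$), and (ii) has odd degree ($=1$) under the projection $pr\colon \mathfrak{F}_f\to P$, because over an interior point of $\mathcal{A}$ the whole complex has exactly one preimage. Now the equivariance lemma (the paper's Lemma 3, applied to $pr|_{\mathcal{L}}\colon \mathcal{L}\to P\cong\mathbb{S}^2$ with the central $\mathbb{Z}_2$-action, \emph{not} to a map to $\mathbb{S}^1$) produces an antipodal $f$-neighbor pair inside $\mathcal{L}$ itself, and connectivity of $\mathcal{L}$ gives the polygonal path to the diagonal. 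One further technical point you skip: $\mathfrak{F}_f$ can be pinched at diagonal vertices, so it is not a closed surface there; the paper first resolves these singularities (passing to $\mathfrak{L}_f$) before running the degree argument. Your reading of Lemma 2 (each edge in exactly two triangles) and of the $\mathbb{Z}_2$-structure is correct, but without the folding component and the degree-one projection the argument does not close.
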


\begin{proof}

    Observe that $\mathfrak{F}_f$ can have singularities at vertices that lie on the diagonal of $P \times P$. First we construct another abstract simplicial complex $\mathfrak{L}_f$ from $\mathfrak{F}_f$ resolving singularities by the following algorithm. For each family of triangles of $\mathfrak{F}_f$ that form a cycle around a vertex in $\mathfrak{F}_f$ we define a corresponding vertex in $\mathfrak{L}_f$. We define triangles and edges $\mathfrak{L}_f$ as sets of vertices of $\mathfrak{L}_f$ such that corresponding to them vertices of $\mathfrak{F}_f$ form an edge or a triangle in $\mathfrak{F}_f$. 
    
    Next we identify $\mathfrak{L}_f$ with some of its geometric realization. Note that there is natural simplicial map $i \colon \mathfrak{L}_f \to \mathfrak{F}_f$ which possibly glues some vertices $\mathfrak{L}_f$ together. 
    
    Let $pr \colon P \times P \to P$ be a projection. Denote by $j$ the composition of $i$ and $pr$. Observe that there exists a component $\mathcal{L}$ of $\mathfrak{L}_f$ such that $j \colon \mathcal{L} \to P$ has degree $1$ and $i(\mathcal{L})$ intersects the diagonal of $P \times P$. Indeed since $f$ is of general position there is a pair of adjacent triangles $\mathcal{A}$ and $\mathcal{B}$ in $P$ such that $f(\mathcal{A}) = f(\mathcal{B})$ (`folding') and there are no other triangles in $P$ whose images coincide with $f(\mathcal{A})$ and $f(\mathcal{B})$. Thus we can take $\mathcal{L}$ as a component of $\mathcal{L}_f$ that contains $i^{-1}([\mathcal{A}, \mathcal{B}])$. 

    There is natural $\mathbb{Z}_2$ action on $\mathfrak{F}_f$ which is the reflection about the diagonal of $P\times P$. This action naturally lifts to $\mathcal{L}$. By Lemma 3 there exists 
    a point $x \in \mathcal{L}$ such that $j$ is $\mathbb{Z}_2$-equivariant at $x$ ($\mathbb{Z}_2$ action on $P$ is choosed relative to the center of $P$). Hence the same is true for map $Pr$ and the point $i(x)$. That is there exists a pair $(p,q)$ of antipodal $f$-neighbors in $\mathcal{L}$. Taking any polygonal path from $(p,q)$ to the diagonal completes the proof.  
    
\end{proof}

\begin{remark}[``Starship Enterprise'']
    Observe that a component $\mathcal{L}$ could be topologically non-trivial. It is an interesting exercise to see that in the case of a map $f\colon S^2 \to \mathbb{R}^2$ shown schematically on Fig. 2, $\mathfrak{L}_f$ is connected and homeomorphic to a torus (hence the same is true for $\mathcal{L}$). It's easy to generalize this example so that $\mathcal{L}$ can have any non-negative genus.

    \begin{figure}[h!]
        \centering
        \includegraphics[width=0.5\textwidth]{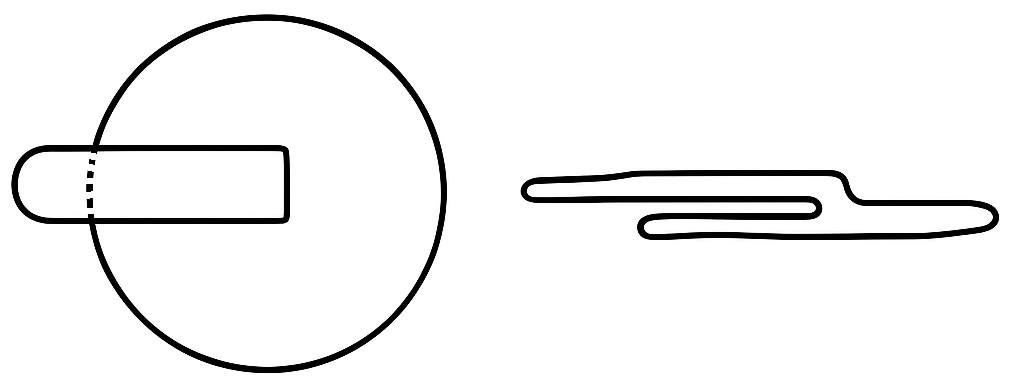}
        \caption{Schematic picture of a map $f\colon S^2 \to \mathbb{R}^2$. Top view on the left, front view on the right (before projection on a plane).}
        \label{fig:Lf_diagram}
    \end{figure}
\end{remark}

To explore the structure of of the set $\mathfrak{D}(\delta)$ of $f$-neighbors realizing distance $\delta > 0$ we need the following auxiliary statement, which is apparently well known in the literature, but we couldn't find the exact formulation. 

\begin{lemma}
    Let $n$ be a positive integer. And let $M$ be a connected closed orientable $n$-dimensional manifold. And let $K$ be a compact subspace of $M$ such that $M \setminus K$ is  disconnected. Then $(n-1)$-dimensional Steenrod homology $H_{n-1}(K; \mathbb{Z})$ are not trivial.
\end{lemma}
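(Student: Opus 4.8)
The plan is to reduce this to a standard Alexander-duality-type statement, but phrased in terms of Steenrod–Sitnikov homology, which is the homology theory that is well-behaved on compact (not necessarily nice) subspaces of manifolds and for which a Čech-type Alexander duality holds. Concretely, I would first invoke Alexander duality in the form $\tilde H^{0}(M \setminus K) \cong \check H_{0}_{c}$-type dual, but it is cleaner to use the Steenrod–Sitnikov version directly: for a compact subset $K$ of a connected closed orientable $n$-manifold $M$ there is an isomorphism
\[
\check H^{k}(K;\mathbb{Z}) \;\cong\; H^{\mathrm{lf}}_{n-k}(M\setminus K;\mathbb{Z}),
\]
and dually, linking Steenrod homology of $K$ with the ordinary (or locally finite) homology of the complement, an isomorphism of the shape $H_{n-1}(K;\mathbb{Z}) \cong \tilde H^{0}(M\setminus K;\mathbb{Z})$ up to the relevant reduced/unreduced bookkeeping. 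The point of using Steenrod homology rather than Čech homology is exactness: Steenrod homology satisfies the exact sequence of a pair and has a $\lim^{1}$ term built in, so the duality is a genuine isomorphism, not merely an isomorphism modulo inverse-limit corrections.

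The key steps, in order, would be: (1) fix an orientation of $M$ and recall the duality isomorphism $H_{n-1}(K;\mathbb{Z}) \cong \check H^{1}_{c}(M\setminus K;\mathbb{Z})$ — more precisely the Steenrod–Alexander duality $\widetilde{H}_{n-1}(K;\mathbb{Z}) \cong \widetilde{H}{}^{0}(M\setminus K;\mathbb{Z})$ in reduced theories, using that $M$ is closed orientable so $H_{n}(M)\cong\mathbb{Z}$ and $H_{n-1}(M)$ plays no role after reduction; (2) observe that $\widetilde H^{0}(M\setminus K;\mathbb{Z})$, the reduced $0$-th (Čech = singular, since $M\setminus K$ is an open manifold, hence locally contractible and an ANR) cohomology, is a free abelian group of rank $(\#\text{ of connected components of }M\setminus K) - 1$; (3) since $M\setminus K$ is assumed disconnected, this rank is at least $1$, so $\widetilde H^{0}(M\setminus K;\mathbb{Z}) \neq 0$; (4) conclude $H_{n-1}(K;\mathbb{Z}) \neq 0$. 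A small additional remark handles the reduced-vs-unreduced discrepancy for $n=1$, where "$(n-1)$-dimensional" means $H_{0}$; there one argues directly that $H_{0}(K;\mathbb{Z})$ of a nonempty compact set separating a $1$-manifold is nonzero, which is immediate.

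The main obstacle — really the only nontrivial point — is justifying the duality isomorphism for an arbitrary compact subset $K$ with no regularity hypotheses, since singular homology of $K$ can be badly behaved (e.g. for solenoids or Warsaw-circle-type sets) and the naive Alexander duality with singular homology fails. This is exactly why the statement is phrased with Steenrod homology: the correct reference is Sitnikov's and Steenrod's duality theorem (see e.g. Massey, \emph{Homology and Cohomology Theory}, or Sklyarenko), which gives $H^{\mathrm{St}}_{q}(K;\mathbb{Z}) \cong \check H^{n-q}(M, M\setminus K;\mathbb{Z})$ for $K$ compact in an orientable $n$-manifold, combined with the long exact sequence of the pair $(M, M\setminus K)$ in Čech cohomology and the computation of $\check H^{*}(M)$. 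I would spell out this exact sequence:
\[
\check H^{0}(M) \to \check H^{0}(M\setminus K) \to \check H^{1}(M,M\setminus K) \to \check H^{1}(M),
\]
identify the first map with the restriction (injective, split, image $= \mathbb{Z}$) and read off $\check H^{1}(M,M\setminus K) \supseteq \widetilde H^{0}(M\setminus K)$, which together with Steenrod duality $H^{\mathrm{St}}_{n-1}(K) \cong \check H^{1}(M,M\setminus K)$ finishes the argument. Everything else is bookkeeping; I expect the write-up to be about a paragraph once the right duality statement is cited.
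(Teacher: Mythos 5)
Your argument is correct and is essentially the paper's own proof: both rest on the Steenrod--Sitnikov/Alexander duality identifying the (Alexander--Spanier/\v{C}ech) cohomology sequence of the pair $(M, M\setminus K)$ with the Steenrod homology sequence of $(M,K)$ (Massey, Theorem 11.9), and both then read off from the exact segment $\mathbb{Z}\cong \check H^{0}(M)\to \check H^{0}(M\setminus K)\cong\mathbb{Z}^{r}\to \check H^{1}(M,M\setminus K)\cong H_{n-1}(K;\mathbb{Z})$ that disconnectedness ($r>1$) forces $H_{n-1}(K;\mathbb{Z})\neq 0$. The only difference is cosmetic bookkeeping (reduced versus unreduced groups), so nothing further is needed.
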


\begin{proof}
    By Theorem 11.9 of \cite{Massey} the cohomological sequence of the pair $(M, M \setminus K)$ is isomorphic to the homological sequence of the pair $(M, K)$. Thus we have the following commutative diagram, where the rows are exact and vertical arrow are isomorphisms: 

    \[
    \begin{tikzcd}
    H^{0}(M; \mathbb{Z}) \arrow[d] \arrow[r] & H^{0}(M\setminus K; \mathbb{Z}) \arrow[d] \arrow[r] & H^1(M, M \setminus K; \mathbb{Z}) \arrow[d] \\
    H_n(M; \mathbb{Z}) \arrow[r] & H_n(M,K; \mathbb{Z}) \arrow[r] & H_{n-1}(K; \mathbb{Z})
    \end{tikzcd}
    \] \footnote{Here $H^q$ denotes the Alexander-Spanier cohomology, defined in  \cite{Massey} for general spaces. For paracompact Hausdorff spaces Alexander-Spanier cohomology coincides with the Alexandrov-\v{C}ech cohomology. See also \cite{Steenrode} for the constructive definition of Steenrod homology.} 

    By Theorem 4.20 in \cite{Massey} $H_n(M; \mathbb{Z}) \cong \mathbb{Z}$. Since $H^{0}(M\setminus K; G) \cong \mathbb{Z}^{r}$, where $r$ denotes the number of connected components of $M\setminus K$ (which can be infinite), we have the following exact sequence: $\mathbb{Z} \to \mathbb{Z}^r \to H_{n-1}(K; \mathbb{Z})$. Since $r > 1$, the statement is proved.

\end{proof}

\begin{proposition}
    Under assumptions of Theorem 4, 
    the set $\mathfrak{D}(\delta)$ of $f$-neighbors realizing a given distance $\delta \in (0, \mathcal{D}(P,d))$, has non-trivial first Steenrod homology with coefficients in $\mathbb{Z}$. 
\end{proposition}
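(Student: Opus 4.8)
The plan is to transport $\mathfrak{D}(\delta)$ into the resolved complex $\mathfrak{L}_f$ constructed in the proof of Theorem~4, and then apply Lemma~5 to the distinguished component $\mathcal{L}$.

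\textbf{Step 1: realize $\mathfrak{D}(\delta)$ inside $\mathfrak{L}_f$.} Since $\delta>0$, every $f$-neighbor $(a,b)$ with $d(a,b)=\delta$ lies off the diagonal of $P\times P$, so $\mathfrak{D}(\delta)\subset\mathfrak{F}_f$; moreover $\mathfrak{D}(\delta)=\rho^{-1}(\delta)$ for the continuous map $\rho\colon\mathfrak{F}_f\to[0,\infty)$, $\rho(a,b)=d(a,b)$, so $\mathfrak{D}(\delta)$ is a compact subset of the finite complex $\mathfrak{F}_f$. The quotient map $i\colon\mathfrak{L}_f\to\mathfrak{F}_f$ is injective (hence, being a closed map of compacta, a homeomorphism) away from the resolved vertices, and every resolved vertex of $\mathfrak{F}_f$ lies on the diagonal; as $\mathfrak{D}(\delta)$ misses the diagonal, $i$ restricts to a homeomorphism $h^{-1}(\delta)\to\mathfrak{D}(\delta)$, where $h:=\rho\circ i\colon\mathfrak{L}_f\to[0,\infty)$. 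Thus it suffices to show $H_1\bigl(h^{-1}(\delta);\mathbb{Z}\bigr)\neq 0$.

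\textbf{Step 2: separate the distinguished component.} Recall from the proof of Theorem~4 the component $\mathcal{L}\subset\mathfrak{L}_f$: it is a connected closed surface, $j=pr\circ i\colon\mathcal{L}\to P$ has degree $1$ (so $j_\ast\colon H_2(\mathcal{L};\mathbb{Z})\to H_2(P;\mathbb{Z})\cong\mathbb{Z}$ is onto, forcing $H_2(\mathcal{L};\mathbb{Z})\neq 0$ and hence $\mathcal{L}$ orientable), $\mathcal{L}$ contains $i^{-1}([\mathcal{A},\mathcal{B}])$ for a folding pair of adjacent triangles $\mathcal{A},\mathcal{B}$, and $\mathcal{L}$ carries a pair $(p,q)$ of antipodal $f$-neighbors. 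The triangle $[\mathcal{A},\mathcal{B}]$ contains the diagonal points $(a,a)$ with $a$ on the common edge of $\mathcal{A}$ and $\mathcal{B}$, so $h|_{\mathcal{L}}$ attains the value $0$; and $h|_{\mathcal{L}}$ attains the value $d(p,q)=d(p,\mathbb{Z}_2(p))\geq\mathcal{D}(P,d)>\delta$ at a preimage of $(p,q)$. Putting $K:=h^{-1}(\delta)\cap\mathcal{L}$, we get $\mathcal{L}\setminus K=\bigl(h^{-1}[0,\delta)\cap\mathcal{L}\bigr)\sqcup\bigl(h^{-1}(\delta,\infty)\cap\mathcal{L}\bigr)$, a disjoint union of two nonempty open sets, so $\mathcal{L}\setminus K$ is disconnected.

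\textbf{Step 3: apply Lemma~5 and reassemble.} Lemma~5, applied to the connected closed orientable surface $\mathcal{L}$ and the compact set $K$ with $\mathcal{L}\setminus K$ disconnected, yields $H_1(K;\mathbb{Z})\neq 0$. Since $\mathcal{L}$ is clopen in $\mathfrak{L}_f$, $K$ is clopen in $h^{-1}(\delta)$; writing $h^{-1}(\delta)=K\sqcup K'$ and using the Mayer--Vietoris sequence (equivalently, additivity of Steenrod homology over a disjoint union of compacta) gives $H_1\bigl(h^{-1}(\delta);\mathbb{Z}\bigr)\cong H_1(K;\mathbb{Z})\oplus H_1(K';\mathbb{Z})$, which is nontrivial. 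Transporting back along the homeomorphism of Step~1 gives $H_1\bigl(\mathfrak{D}(\delta);\mathbb{Z}\bigr)\neq 0$.

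\textbf{Main obstacle.} The difficulty is not a computation but two structural facts about the construction underlying Theorem~4: that the singular vertices of $\mathfrak{F}_f$ all lie on the diagonal of $P\times P$ (which is what makes the homeomorphism of Step~1 legitimate), and that the folding point where $h=0$ and the Borsuk--Ulam antipodal pair where $h>\delta$ lie in the \emph{same} component $\mathcal{L}$ --- this is exactly what makes $h|_{\mathcal{L}}$ straddle the value $\delta$ and hence separate $\mathcal{L}$. Both are essentially bookkeeping about how $\mathcal{L}$ was selected in the proof of Theorem~4; the orientability of $\mathcal{L}$ is a minor point already implicit in the degree-$1$ assertion used there.
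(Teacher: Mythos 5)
Your proof is correct and follows essentially the same route as the paper: lift the distance function to the resolved complex $\mathfrak{L}_f$, use the component $\mathcal{L}$ on which it sweeps out $[0,\mathcal{D}(P,d)]$, observe that the level set at $\delta$ separates $\mathcal{L}$, and apply Lemma~5. You are in fact slightly more careful than the paper on two points it leaves implicit --- the orientability of $\mathcal{L}$ (needed for Lemma~5) and the possibility that $\mathfrak{D}(\delta)$ has pieces outside $i(\mathcal{L})$, which you handle by additivity of Steenrod homology over the clopen decomposition.
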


\begin{proof}
    The distance function $d$ naturally lifts from complex of $f$-neighbors $\mathfrak{F}_f$ to the continuous function $\tilde{d}$ on $\mathcal{L}_f$ (see the proof of the Theorem 4 for notation). By Theorem 4, the exists a component $\mathcal{L}$ of $\mathcal{L}_f$ such that $\tilde{d}(\mathcal{L}) = [0, \mathcal{D}(P,d)]$. Since the projection $i \colon \mathcal{L} \to \mathfrak{F}_f$ is a homeomorphism except possibly the vertices, where $\tilde{d} = 0$, the set $C_{\delta} = \{x \in \mathcal{L} ~ | ~ \tilde{d}(x) = \delta\}$ is homeomorphic to $\mathfrak{D}(\delta)$. Applying Lemma 3 to $(\mathcal{L}, C_{\delta})$ completes the proof.

\end{proof}

\section{Acknowledgments}

I am grateful to Oleg R. Musin for pointing out the research direction of generalization of the Hopf theorem to the case of convex surfaces, interesting discussions and inspiration. 

I sincerely thank Andrey Grigoryev bringing to my attention several valuable papers of A.D.Alexandrov found in the laboratory office.

\end{document}